\newtheorem{theorem}{Theorem}[section]
\newtheorem{lemma}[theorem]{Lemma}
\newtheorem{corollary}[theorem]{Corollary}
\newtheorem{definition}[theorem]{Definition}
\theoremstyle{definition}
\newtheorem{remark}[theorem]{Remark}
\newtheorem{question}[theorem]{Question}
\numberwithin{equation}{section}
\def\be{\begin{equation}}
\def\ee{\end{equation}}
\def\diam{\text{ diam}}
\def\dist{\text{ dist}}
\def\R{\mathbb{R}}
\newcounter{alphabet}
\newcommand*{\rom}[1]{\expandafter\@slowromancap\romannumeral #1@}
\begin{document}
\bibliographystyle{amsplain}
\title{On invariance of John domains under quasisymmetric mappings}
\author[Vasudevarao Allu]{Vasudevarao Allu}
\address{Vasudevarao Allu, School of Basic Sciences, 
Indian Institute of Technology Bhubaneswar,
Bhubaneswar-752050, 
Odisha, India.}
\email{avrao@iitbbs.ac.in}
\author[Alan P Jose]{Alan P Jose}
\address{Alan P Jose, School of Basic Sciences, Indian Institute of Technology Bhubaneswar,
Bhubaneswar-752050, Odisha, India.}
\email{alanpjose@gmail.com}
\subjclass[2010]{30C45, 30C50, 30C55.}
\keywords{Free quasiworld, John domains, Quasisymmetric mappings, Uniform domains, Natural domains}

\begin{abstract}
In this paper, we prove that if a homeomorphism is quasisymmetric relative to the boundary of the domain then it maps a length John domain to a diameter John domain. 
 Moreover, we prove a necessary and sufficient condition for a diameter John domain to be length John and thereby prove that if $f:G\rightarrow G'$ is $(M, C)-$CQH map, where $G$ is a John domain, and the map extends to the boundary such that the extension is $\eta-$QS relative to $\delta G$ then $G'$ is a John domain.
 In addition, we  characterize  distance John domains using weak minimizing property.
\end{abstract}

\maketitle

\section{Introduction}

The concept of quasiconformality in the plane was introduced by Grot\"{o}zsch in 1928. The idea was later extended by a number of authors to the case where $\mathbb{R}^n \text{ for } n\geq 3$. 
Later, Gehring and his students Palka and Osgood \cite{gehpal,gehosg} have introduced the quasihyperbolic metric which is the generalization of the classical hyperbolic metric in the unit disk.
Since then the quasihyperbolic metric has become an important tool in the studying the quasiconformal maps in Euclidean spaces. 
In the 1990s, V{\"a}is{\"a}l{\"a}, through a series of papers, see \cite{va1, va2, va3} , began his systematic investigation of quasiconformality in Banach spaces of infinite dimension, which he termed the free quasiworld.
Since in infinite dimensional Banach spaces many of the methods and tools that are helpful in $\mathbb{R}^n$ are unavailable, he constructed the theory using just elementary arguments. Recently, there have been many enquiries towards this subject (see \cite{zhousamy, zhou-li-rasila-2022, zhou-samy-2023, rasila-talponen-2014}).
\vspace*{2mm}

John domain were introduced by F. John \cite{john} in 1961 in connection with elasticity theory. Later, in 1978 O. Martio and J. Sarvas \cite{ms} coined the term "John domain" and used it to define uniform domains.
 These two classes were used to study injectivity properties of locally quasiconformal and locally bilipschitz maps. 
 The properties and characterizations of John domains in Euclidean spaces and metric spaces were explored by several authors (see \cite{kim-lang-1998, gehring-hag-martio-1989, guo-koskela-2014, heinonen-1989, herron-1999}). An excellent exposition on John disks is provided by N{\"a}kki and V{\"a}is{\"a}l{\"a}\cite{nakki-vaisala-1991}.
\vspace*{2mm}

In this paper, $E$ and $E'$ will always denote real Banach spaces of dimension atleast two. Furthermore, $G \subset E$, $G' \subset E'$ will always be domains (open connected sets). For notations and definitions we refer to \cite{quasiworld}.
In \cite{va2}, V{\"a}is{\"a}l{\"a} obtained the following result  relating the class of uniform domains and quasim\"{o}bius maps in Banach spaces, which is the generalization of the corresponding result in $\mathbb{R}^n$, see \cite{vaqm}.

\begin{theorem}[\cite{va2}]\label{va2result}
Suppose that $G$ is a $c$-uniform domain and that $f:G\rightarrow G'$ is freely $\psi$-quasiconformal, briefly $\psi$-FQC, where $G$ and $G'$ are domains in the Banach spaces $E$ and $E'$, respectively. Then the following conditions are quantitatively equivalent:
\begin{enumerate}
\item $G'$ is $c_1$-uniform,
\item $f$ is $\eta $-quasim\"{o}bius.
\end{enumerate}
\end{theorem}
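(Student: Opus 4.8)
\medskip
\noindent\textbf{Proof strategy.}
I would prove the two implications $(1)\Rightarrow(2)$ and $(2)\Rightarrow(1)$ separately, in each case working through the quasihyperbolic metric $k_G$ and the distance-ratio metric
\[
j_G(x,y)=\log\Bigl(1+\frac{|x-y|}{\min\{\,d(x,\partial G),\,d(y,\partial G)\,\}}\Bigr).
\]
The starting points are: (a) the Gehring--Osgood-type description of uniformity --- one always has $j_G\le k_G$, and $G$ is $c$-uniform if and only if $k_G\le C\,j_G$ on $G\times G$, with $C$ and $c$ depending only on each other; in the Banach-space setting this is itself a theorem of V\"{a}is\"{a}l\"{a}, which I would simply quote --- and (b) the fact that a $\psi$-FQC map is $\psi$-solid, i.e.\ $k_{G'}(fx,fy)\le\psi(k_G(x,y))$ together with the symmetric estimate for $f^{-1}$. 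A third input, used to bring the quasim\"{o}bius condition into play, is that an FQC map of a uniform domain extends to a homeomorphism of the completions carrying $\partial G$ onto $\partial G'$, so that boundary points may legitimately serve as arguments of cross ratios.

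For $(1)\Rightarrow(2)$: assuming $G$ and $G'$ uniform and $f$ $\psi$-FQC, I would compose solidity with the comparability $k\asymp j$ (valid in both $G$ and $G'$ by (a)) to obtain a two-sided distortion estimate for the distance-ratio metrics, $j_{G'}(fx,fy)\le\psi_{1}(j_G(x,y))$ and the reverse, with $\psi_1$ depending only on $\psi$ and the uniformity constants. Rewriting $j_G$ via the quantities $|x-y|/|x-p|$ for $p$ a near-nearest boundary point of $x$, and then introducing a fourth point $q\in\partial G$ so as to form an honest cross ratio $|x,p,y,q|$, one upgrades this to the assertion that the boundary extension of $f$ is $\eta$-quasim\"{o}bius with $\eta=\eta(\psi,c)$; restricting to $G$ gives $(2)$.

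For $(2)\Rightarrow(1)$: assuming $f$ $\eta$-quasim\"{o}bius (and still $\psi$-FQC) and $G$ $c$-uniform, the goal is $k_{G'}\le C_1\,j_{G'}$. Writing $x=f^{-1}x'$, $y=f^{-1}y'$, I would chop the $G$-quasihyperbolic geodesic from $x$ to $y$ into consecutive points at $j_G$-distance $\approx 1$; their number is $\lesssim k_G(x,y)\lesssim j_G(x,y)$ by (a), and solidity bounds $k_{G'}$ across each step by a constant, giving $k_{G'}(x',y')\le C_2\,j_G(x,y)$. It then remains to transfer the distance-ratio data back, $j_G(x,y)\le C_3\,j_{G'}(x',y')$; here the quasim\"{o}bius control of $f^{-1}$ (equivalently, after fixing a boundary point, its quasisymmetry), applied to the configuration formed by $x$, $y$ and the pull-back of a nearest boundary point of $x'$, does the work. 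Combining the two bounds gives $k_{G'}\le C_1\,j_{G'}$, hence $G'$ is $c_1$-uniform with $c_1=c_1(c,\psi,\eta)$ by (a); all constants being built explicitly from $\psi$, $\eta$, $c$, the equivalence is quantitative.

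The step I expect to be the main obstacle, in both directions, is the control of $f$ \emph{at and near the boundary}: the ratio-to-cross-ratio upgrade in $(1)\Rightarrow(2)$, and the requirement in $(2)\Rightarrow(1)$ that the $j$-metric transfer be genuinely \emph{linear} rather than merely controlled by an increasing homeomorphism --- which is exactly why the FQC and quasim\"{o}bius hypotheses must be used together rather than one after the other. In finite dimensions such boundary control can lean on compactness; in the free quasiworld it has to be squeezed out of the solidity estimate and the uniform-domain geometry directly, and carrying this through carefully is the heart of the proof.
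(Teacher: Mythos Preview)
The theorem you are proving is not proved in this paper at all: it is Theorem~1.1, quoted verbatim from V\"{a}is\"{a}l\"{a}~\cite{va2} as background and motivation for Question~\ref{rasilaqn}. The authors give no argument for it, so there is no ``paper's own proof'' to compare your proposal against.

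That said, your outline is broadly in the spirit of V\"{a}is\"{a}l\"{a}'s actual approach in \cite{va2}: the quasihyperbolic/solidity machinery and the $k_G$--$j_G$ comparison are indeed the workhorses there. Two cautions if you intend to flesh this out. First, in the Banach-space setting the clean equivalence ``$G$ uniform $\iff k_G\le C\,j_G$'' that you invoke in (a) is not available in the Gehring--Osgood form; V\"{a}is\"{a}l\"{a} works instead with the curve definition of uniformity and with neargeodesics, and the passage between $j$-type data and uniformity constants is handled by ad~hoc estimates rather than a single black-box inequality. Second, the boundary-extension step you flag as the main obstacle really is one: in \cite{va2} the extension and the quasim\"{o}bius control on $\overline{G}$ are obtained via a separate argument (relative quasisymmetry on the boundary, cluster-set considerations), not by the cross-ratio manipulation you sketch. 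So your strategy is morally right but would need to be reorganized around the tools actually available in the free quasiworld; as written it leans on finite-dimensional shortcuts.
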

 Later many enquired whether the result holds if you replace $\psi$-FQC maps with $C-$coarsely $M-$quasihyperbolic, briefly $(M,C)$-CQH,  and $\eta $-quasim\"{o}bius with $\eta $-quasim\"{o}bius rel $\delta G$  \cite{hlvw}
or $\eta $-quasim\"{o}bius on $\delta G$ \cite{zhousamy}. \\
In 2021, Zhou and Rasila \cite{zhourasila} considered the following question:
\begin{question} \label{rasilaqn}
Suppose that  $G$ is a $c$-uniform domain and that a homeomorphism $f:\bar{G} \rightarrow \bar{G'} $ is $\eta$ quasim\"{o}bius relative to $\delta G$ and maps $G$ onto $G'$. Is $G'$ a $c'$ - uniform with $c'=c'(c,\eta)$?
\end{question}
Studying this problem, Zhou and Rasila \cite{zhourasila} have obtained the following result concerning the implications of uniform domains, diameter uniform domains and min - max property:
\begin{theorem}\label{rasilathm}
Suppose that $G\subset E$ is a domain. Then we have the following implications: $G$ is $c$-uniform $\implies$ $G$ satisfies the min-max property $\implies $ $G$ is diameter $c_1$ uniform $\iff $ $G$ is $\delta$ -uniform for some $0<\delta <1.$
\end{theorem}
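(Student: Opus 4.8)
The plan is to follow the four-term chain in the statement, establishing each arrow by a direct estimate and keeping every constant quantitative. The single elementary fact used repeatedly is that $\operatorname{diam} S\le\ell(S)$ for every connected set $S$, so that a bound phrased in terms of arclength always descends to the analogous bound phrased in terms of diameter; the reverse passage fails, and this asymmetry is exactly why the three target classes are strictly weaker than $c$-uniformity and why the implications run only one way.

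For the first arrow, given $x_1,x_2\in G$ take a $c$-uniform arc $\gamma$ joining them, so $\ell(\gamma)\le c\,|x_1-x_2|$ and $\min\{\ell(\gamma[x_1,x]),\ell(\gamma[x,x_2])\}\le c\,d(x,\partial G)$ for all $x\in\gamma$. Applying $\operatorname{diam}\le\ell$ to the subarcs turns the second inequality into $\min\{\operatorname{diam}\gamma[x_1,x],\operatorname{diam}\gamma[x,x_2]\}\le c\,d(x,\partial G)$, while $\operatorname{diam}\gamma[x_1,x]\le\ell(\gamma)\le c\,|x_1-x_2|$ and the analogous estimate for $\gamma[x,x_2]$ control the maximum of the two subarc diameters; thus $\gamma$ itself witnesses the min-max property with the same constant. (If the min-max property is recorded through a single centre $x_0$ instead, take $x_0$ to be the arclength midpoint of $\gamma$: then $|x_0-x_i|\le\tfrac12\ell(\gamma)\le c\,|x_1-x_2|$ for $i=1,2$ and $\min\{|x_0-x_1|,|x_0-x_2|\}\le\min\{\ell(\gamma[x_1,x_0]),\ell(\gamma[x_0,x_2])\}\le c\,d(x_0,\partial G)$.)

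For the second arrow --- the step I expect to demand the most care --- one must pass from the min-max data to an actual diameter-uniform arc. If that data is already a connecting arc $\gamma$ carrying the two diameter inequalities for every $x\in\gamma$, reading the ``maximum'' inequality at $x=x_2$ gives $\operatorname{diam}\gamma\le c\,|x_1-x_2|$ and the ``minimum'' inequality is the diameter double-cone condition, so $G$ is diameter $c$-uniform immediately. If instead the data is only pointwise, the arc has to be built by the familiar recursive scheme: join $x_1$ to $x_2$ through a min-max centre, join the two resulting subpairs through their own centres, iterate, and pass to a limit. The delicate point is to arrange the recursion (or a compactness argument on the approximating broken lines) so that the pieces produced at each generation remain inside a ball of radius a fixed multiple of $|x_1-x_2|$ about $x_1$; since only the diameter of the limiting arc --- not its length --- need stay bounded, no geometric decay of the mesh is required, but one must still ensure that the limit is a genuine arc inheriting both the diameter quasiconvexity and the double-cone estimate with a constant $c_1=c_1(c)$.

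The final, biconditional arrow is a matching of two normalisations. From a diameter $c_1$-uniform arc $\gamma$, set $\delta=1/(c_1+1)$; then $\operatorname{diam}\gamma\le c_1|x_1-x_2|$ reads as $\operatorname{diam}\gamma\le\delta^{-1}|x_1-x_2|$ and $\min\{\operatorname{diam}\gamma[x_1,x],\operatorname{diam}\gamma[x,x_2]\}\le c_1\,d(x,\partial G)$ reads as $d(x,\partial G)\ge\delta\,\min\{\operatorname{diam}\gamma[x_1,x],\operatorname{diam}\gamma[x,x_2]\}$, which is exactly $\delta$-uniformity; that $\delta$ can be kept in $(0,1)$ is forced by $\operatorname{diam}\gamma\ge|x_1-x_2|$, whence $c_1\ge1$. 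Conversely a $\delta$-uniform arc is diameter $\delta^{-1}$-uniform on reading the same two inequalities in the other order. No step invokes the dimension of $E$, so all constants produced depend only on one another --- the feature on which the quasisymmetric-invariance results in the body of the paper will rely.
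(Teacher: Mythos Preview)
The paper does not prove this theorem at all: Theorem~\ref{rasilathm} is quoted from Zhou and Rasila \cite{zhourasila} as a background result motivating the authors' own work on John domains, and no argument for it appears anywhere in the present paper. There is therefore no ``paper's own proof'' against which to compare your proposal.

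On the merits of your sketch: the first and third arrows are fine and are exactly the soft normalisation arguments you describe. The second arrow, however, is where the content lies, and your treatment there is a plan rather than a proof. You correctly identify that if the min--max data is only a point (a centre $x_0$ for each pair $x_1,x_2$) then one must build the arc by iterated bisection, but you then list the two genuine difficulties --- that the approximating broken lines stay inside a controlled ball, and that the limit is an honest arc carrying both diameter conditions --- without resolving either. In a Banach space there is no local compactness, so ``pass to a limit'' of a sequence of broken lines is not automatic; one needs either a direct Cauchy-type argument on the parametrised polygonal maps or an appeal to the specific structure of the min--max condition to force convergence. This is precisely the work done in \cite{zhourasila}, and your proposal does not reproduce it.
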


Zhou and Rasila \cite{zhourasila} have obtained the following result as a consequence of Theorem \ref{rasilathm}.

\begin{theorem}\label{rasilainvthm}
Let $G$ and $G'$ be proper domains in Banach spaces $E$ and $E'$, respectively. Suppose that $G$ is $c$-uniform and that a homeomorphism $f:\bar{G} \rightarrow \bar{G'}$ is $\eta$-quasim\"{o}bius relative to $\delta G$ and maps $G$ onto $G'$. Then $G'$ is diameter $c'$-uniform with some $c'=c'(c,\eta)$.
\end{theorem}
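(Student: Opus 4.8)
The plan is to combine Theorem~\ref{rasilathm} with a transfer of the relevant metric geometry across $f$. First I would apply Theorem~\ref{rasilathm}: since $G$ is $c$-uniform it satisfies the min-max property with a constant $C=C(c)$, so for every pair $x,y\in G$ there is an arc $\gamma=\gamma_{xy}\subset G$ joining $x$ and $y$ whose diameter, and the diameters of its two subarcs $\gamma[x,z]$, $\gamma[z,y]$, are controlled in terms of $C$, of $|x-y|$, and of the distances $d(z,\delta G)$ along $\gamma$. The aim is then to show that the images $f(\gamma_{xy})$ exhibit $G'$ as a diameter $c'$-uniform domain: writing $x'=f(x)$ and $y'=f(y)$, once the two inequalities
\[
\operatorname{diam} f(\gamma_{xy})\le c'|x'-y'|,\qquad \min\{\operatorname{diam} f(\gamma_{xy})[x',z'],\operatorname{diam} f(\gamma_{xy})[z',y']\}\le c'\,d(z',\delta G')
\]
are checked for all $z'\in f(\gamma_{xy})$, the theorem follows with $c'=c'(c,\eta)$.

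The heart of the argument is to push these two inequalities through $f$ using only that $f$ and $f^{-1}$ are $\eta$-quasim\"{o}bius relative to $\delta G$ and $\delta G'$. Since $f\colon\bar G\to\bar{G'}$ is a homeomorphism it carries $\delta G$ onto $\delta G'$, so for any $z'\in G'$ a near-closest boundary point $w'\in\delta G'$ of $z'$ has a preimage $w=f^{-1}(w')\in\delta G$; such a $w$ is a legitimate ``fourth point'' for the relative quasim\"{o}bius inequality. Given a subarc $\beta\subset f(\gamma_{xy})$ I would pick $p,q\in\beta$ with $|p-q|\ge\tfrac12\operatorname{diam}\beta$, pull them back to $\tilde p,\tilde q$ on the corresponding subarc of $\gamma_{xy}$, and form cross-ratios
\[
|\tilde p,\tilde q,\tilde z,w|=\frac{|\tilde p-\tilde z|\,|\tilde q-w|}{|\tilde p-\tilde q|\,|\tilde z-w|}
\]
for suitable interior points $\tilde z$ (one of the endpoints $x,y$, or the point $z$ at hand) and suitable boundary points $w$ coming from closest-point data. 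Applying $\eta$ (or its inverse, via $f^{-1}$) to a quadruple ordered so that exactly one factor is of the form $|\tilde z-w|\asymp d(\tilde z,\delta G)$, and using the min-max bounds on the original diameters, converts each cross-ratio estimate into a bound relating $\operatorname{diam}\beta$, $|x'-y'|$ and $|z'-w'|\asymp d(z',\delta G')$. Summing the estimates for the two subarcs of $f(\gamma_{xy})$ yields the two displayed inequalities.

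The main obstacle is that the quasim\"{o}bius inequality controls each cross-ratio only one at a time, whereas a diameter is a supremum; hence beyond passing to a near-extremal pair $p,q$ one must keep every quadruple used in nondegenerate position with respect to the scales in play. I expect this to force a case analysis: according to which of $\gamma[x,z]$, $\gamma[z,y]$ is the shorter subarc, and according to whether a relative distance such as $|\tilde p-\tilde q|/d(\tilde z,\delta G)$ is large or small, one must choose the comparison boundary points (closest-point projections of $x$, $y$, $z$, or of $p,q$) so that the chosen cross-ratio is bounded away from its degenerate values and $\eta$ can be applied in the needed direction; the borderline case in which all these relative distances are comparable is where the bookkeeping should be most delicate. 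A secondary point to dispatch is unbounded $G$ or $G'$, handled by passing to $\dot E=E\cup\{\infty\}$ and adjoining $\infty$ to the boundary data, so that the cross-ratio estimates and the boundary extension of $f$ remain valid. Since Theorem~\ref{rasilathm}, the min-max property, and all the cross-ratio computations are dimension-free, the resulting constant $c'$ depends only on $c$ and $\eta$.
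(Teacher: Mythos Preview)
This theorem is not proved in the present paper; it is quoted from Zhou and Rasila \cite{zhourasila}, and the only indication given here is the sentence ``obtained \dots\ as a consequence of Theorem~\ref{rasilathm}.'' So there is no in-paper proof to compare against line by line; the paper simply records that the result goes through the min--max property.

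Your overall strategy is consistent with that indication, but you take a harder road than necessary. You start from the min--max property for $G$ and then try to verify the \emph{diameter-uniform} inequalities for $G'$ directly, by choosing near-extremal pairs on image subarcs and building cross-ratios with boundary points. The route actually signalled by the paper (and the one carried out in \cite{zhourasila}, and mirrored in this paper's own arguments for John domains) is shorter: show that the \emph{min--max property itself} is preserved by an $\eta$-quasim\"obius map relative to $\delta G$, and only afterwards invoke the implication ``min--max $\Rightarrow$ diameter uniform'' from Theorem~\ref{rasilathm} on $G'$. This works because the min--max inequalities compare $|x-y|$ with $|x_j-y|$ for $y\in\delta G$, i.e.\ they are statements about the cross ratio $|x_1,x,x_2,y|$ with one entry on the boundary, which is exactly what quasim\"obius rel $\delta G$ controls; no diameter/supremum step, no near-extremal pair, and no case analysis on relative scales is needed at this stage. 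Your scheme is not wrong in spirit, but the ``main obstacle'' you flag (passing from a cross-ratio bound on a single quadruple to a diameter bound) is self-imposed: it disappears once you transfer the min--max property rather than the diameter-uniform condition.
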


Also in 2022, Ouyang, Rasila and Guan \cite{org} characterized diameter uniform domains using weak min-max property and showed that diameter uniform domains are invariant under quasim\"{o}bius maps relative to the boundary. Since every uniform domain is a John domain it is natural to ask whether Theorem \ref{rasilainvthm} holds if we replace uniform domains by John domains.
 
But if  $G = \mathbb{D} \setminus [-\frac{1}{2}, 1)$, where $\mathbb{D} = \left\{z\in \mathbb{C} : |z| < 1\right\}$ and $\left[-\frac{1}{2}, 1\right)$ denotes the line segment joining $-\frac{1}{2}$ and $1$,  and $f:G \rightarrow \mathbb{C} $ defined by $f(z) = z/|z|^2$, then $f$ is $\eta $ quasim\"{o}bius with $\eta(t)=81t$. Nevertheless, $f$ maps a John domain onto a domain which is not John.

Therefore, it is reasonable to ask the following question:

\begin{question}\label{qn}
Let $E$ and $E'$ be Banach spaces of dimension $n\geq 2$ and $G\subset E$, $G'\subset E'$ be domains. Further, suppose that $G$ is a $c$-John domain. If a homeomorphism $f:G \rightarrow G'$, what are the conditions $f$ need to satisfy so that $G'$ is a $c'$-John domain.
\end{question}

In 2019, Li, Vuorinen and Zhou \cite{vuorinen} characterized John domain with a center in a locally $(\lambda, c )$-quasiconvex, rectifiably connected, non-complete metric space and thereby obtained that if $f$ is an $\eta$-quasisymmetric homeomorphism then it maps  John domain with center $x_0$ to a John domain with center $f(x_0)$.

\begin{theorem}\label{vuoresult}
Suppose  $f:D \rightarrow D'$ is an $\eta$- quasisymmetric homeomorphism between two locally $(\lambda , c) $- quasiconvex, rectifiably connected, non-complete metric spaces. If $D$ is a length $a$-John space with center $x_0$, then $D'$ is a length $a'$ -John space with center $x_0'=f(x_0)$, where $a'$ depends on $\lambda , c, \eta $ and $a$.
\end{theorem}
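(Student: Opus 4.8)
The plan is to route the argument through the \emph{diameter} John condition, which --- unlike the length condition --- is purely metric and hence transforms well under quasisymmetry, and then to recover the length condition in $D'$ from its local quasiconvexity. Write $d,d'$ for the metrics and $d(\cdot,\partial D),\,d'(\cdot,\partial D')$ for the distances to the (nonempty, as $D,D'$ are non-complete) metric boundaries. Call $D$ \emph{diameter $a$-John with center $x_0$} if every $x\in D$ is joined to $x_0$ by a curve $\gamma\colon[0,1]\to D$, $\gamma(0)=x$, $\gamma(1)=x_0$, with $\operatorname{diam}\gamma([0,t])\le a\,d(\gamma(t),\partial D)$ for all $t$. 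Since $\operatorname{diam}\le\ell$, a length $a$-John space is automatically diameter $a$-John with the same center, so the theorem follows from two assertions: \textbf{(A)} an $\eta$-quasisymmetric homeomorphism maps a diameter $a$-John space with center $x_0$ onto a diameter $a''$-John space with center $f(x_0)$, $a''=a''(\eta,a)$; and \textbf{(B)} a locally $(\lambda,c)$-quasiconvex, rectifiably connected, non-complete metric space that is diameter $b$-John with center is in fact length $b'$-John with the same center, $b'=b'(\lambda,c,b)$. Chaining them with $b=a''$ gives $a'=a'(\lambda,c,\eta,a)$. Assertion (B) is essentially the characterization of John spaces with a center proved in \cite{vuorinen}, so one may simply quote it; I sketch both steps below.

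For (A): use the classical fact that $f$ extends to an $\eta$-quasisymmetric homeomorphism $\bar f\colon\bar D\to\bar{D'}$ of the completions, carrying $\partial D$ onto $\partial D'$. Fix $x'\in D'$, put $x=f^{-1}(x')$, take a diameter $a$-John curve $\gamma$ from $x$ to $x_0$, and set $\gamma'=f\circ\gamma$, a curve from $x'$ to $f(x_0)$. Fix $t$, write $y=\gamma(t)$, and let $p=\gamma(s)$ with $s\le t$, so $d(p,y)\le\operatorname{diam}\gamma([0,t])\le a\,d(y,\partial D)$. Pick $w'\in\partial D'$ with $d'(f(y),w')<2\,d'(f(y),\partial D')$ and put $v=\bar f^{-1}(w')\in\partial D$; then $d(y,v)\ge d(y,\partial D)$, so $d(y,p)/d(y,v)\le a$, and quasisymmetry of $\bar f$ gives
\[
 \frac{d'(f(p),f(y))}{d'(f(y),w')}=\frac{d'(\bar f(y),\bar f(p))}{d'(\bar f(y),\bar f(v))}\le\eta\!\left(\frac{d(y,p)}{d(y,v)}\right)\le\eta(a),
\]
whence $d'(f(p),f(y))\le 2\eta(a)\,d'(f(y),\partial D')$. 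Taking the supremum over $s\le t$ and doubling, $\operatorname{diam}\gamma'([0,t])\le 4\eta(a)\,d'(\gamma'(t),\partial D')$; thus $D'$ is diameter $4\eta(a)$-John with center $f(x_0)$.

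For (B): given $z$ with a diameter $b$-John curve $\alpha$ to the center $\omega_0$, one has $d(z,\alpha(t))\le b\,d(\alpha(t),\partial\Omega)$ for all $t$, so the boundary distance grows in proportion to the distance travelled from $z$. Using this, select along $\alpha$ a Whitney-type chain $z=q_0,q_1,\dots,q_N=\omega_0$, of the kind standard in the theory of John and uniform domains, which makes geometric progress towards $\omega_0$ (successive boundary distances $d(q_j,\partial\Omega)$ having bounded ratios, with $N$ finite because the boundary distances cannot collapse) while keeping each gap $d(q_j,q_{j+1})$ small relative to $d(q_j,\partial\Omega)$ and $d(q_{j+1},\partial\Omega)$. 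Local $(\lambda,c)$-quasiconvexity then joins $q_j$ to $q_{j+1}$ by a curve $\sigma_j$ with $\ell(\sigma_j)\le\lambda\,d(q_j,q_{j+1})$; the concatenation $\beta=\sigma_0\cdots\sigma_{N-1}$ is a rectifiable curve from $z$ to $\omega_0$, and because the $\ell(\sigma_i)$ form a geometric-type sum --- the length accumulated up to a point of $\sigma_j$ being at most a constant multiple of $d(q_j,\partial\Omega)$, itself comparable to $d(\cdot,\partial\Omega)$ all along $\sigma_j$ --- one obtains the length John inequality with $b'=b'(\lambda,c,b)$.

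The genuine obstacle is (B). The quasisymmetric estimate in (A) is immediate once one phrases everything with diameters; but producing a \emph{rectifiable}, length-controlled curve in $D'$ is delicate. The hard points are: (a) constructing the Whitney chain so that boundary distances grow geometrically \emph{and} consecutive gaps stay a fixed fraction of the boundary distances --- these two requirements pull against each other, and reconciling them (typically by subdividing further near the deep end of the curve) is exactly where the diameter-John inequality for $\alpha$ must be used fully, and where one rules out infinitely many pieces accumulating short of the center; (b) tracking the geometric summation so that $b'$ depends only on $\lambda,c,b$, and hence $a'$ only on $\lambda,c,\eta,a$, not on the point or on the space; and (c) checking that $\beta$ has finite length on every initial segment. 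The boundary-extension fact invoked in (A) also merits a line, but it is classical for quasisymmetric maps.
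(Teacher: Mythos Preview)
This theorem is not proved in the paper under review: it is stated in the introduction as a result of Li, Vuorinen and Zhou \cite{vuorinen}, quoted for context and motivation, and no proof is given anywhere in the paper. There is therefore nothing here to compare your proposal against.

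That said, your two-step plan --- (A) quasisymmetry preserves the diameter John condition because diameters and boundary distances transform well, and (B) local $(\lambda,c)$-quasiconvexity upgrades diameter John with center to length John with the same center via a Whitney-type chain --- is precisely the strategy carried out in \cite{vuorinen}, and you yourself observe that (B) can be quoted from there. Your argument for (A) is correct, including the constant $4\eta(a)$ and the use of the boundary extension of a quasisymmetric map. Your outline of (B) names the genuine difficulties (arranging geometric growth of boundary distances while keeping consecutive gaps small, ruling out infinite chains, and tracking constants) without resolving them in detail, which is consistent with deferring to \cite{vuorinen} for the full proof.
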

Now we introduce minimizing and weak minimizing property for a domain $G\subset E$.

\begin{definition}\label{minppty}
Let $G\subset E $ be a domain then we say $G$ has minimizing property if there exists a family of curves $\Gamma$ in $G$ and a constant $c$ such that each pair of points in $G$ can be joined by a curve $\gamma \in \Gamma$  satisfying
\begin{equation}\label{mineqn}
\min_{j=1,2}|x_j -y| \leq c|x-y|
\end{equation} 
for each ordered triple of points $x_1, x, x_2 \in \gamma $ and each $y \in \delta G$.
\end{definition}

\begin{definition}\label{weakminppty}
Let $G$ be a domain if there exists a family of curves $\Gamma$ in $G$ and a constant $c$ such that for each $x_1, x_2 \in G$ there exists a curve $\gamma :x_1 \curvearrowright x_2$ in $\Gamma$ such that  
\begin{equation}\label{weakmineqn}
\min_{j=1,2}|x_j -y| \leq c|x-y|
\end{equation} 
for each $x \in \gamma$ and $y\in \delta G$ then we say that $G$ has weak minimizing property.
\end{definition}
\begin{remark}
In 1987, Gehring and Hag have proved that the endpoints of a hyperbolic geodesic $\gamma $ in $\mathbb{B}^n$ both minimize and maximize the distance from a point of $\gamma $ to any point outside $\mathbb{B}^n$ upto the constant $2^{1/2}$ and the constant is the best possible. 
 They called it the min-max property of hyperbolic geodesics and motivated by this, they introduced domains with min-max property in $\R^n$. 
 It is worth to mention that O.J. Broch \cite{} in his thesis proved that minimizing property  characterizes John domains in $\mathbb{R}^n$.
\end{remark}
Now we state our main results. (See subsection \ref{subjohn} for definitions.)

\begin{theorem}\label{thm1}
Let $G\subset E$ be a domain then the following implications  hold:\\
$G$ is $c$-John domain $\implies $ $G$ has the minimizing property $\iff $ G is diameter $c_1$ John.
\end{theorem}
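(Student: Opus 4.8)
The plan is to prove the three links of the chain separately: (1) a $c$-John domain has the minimizing property; (2) the minimizing property implies $G$ is diameter $c_1$-John; (3) a diameter $c_1$-John domain has the minimizing property. The common thread is the two-point formulation of the diameter-John condition — each pair $p,q\in G$ is joined by a curve $\gamma$ with $\min\{\operatorname{diam}\gamma[p,x],\operatorname{diam}\gamma[x,q]\}\le c_1\,d(x,\delta G)$ for all $x\in\gamma$ — and the only metric facts needed are the triangle inequality, the trivial bound $d(x,\delta G)\le|x-y|$ for every $y\in\delta G$, and the observation that a subcurve $\beta$ containing a point $x$ satisfies $\operatorname{diam}\beta\le 2\sup_{w\in\beta}|w-x|$.

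For (1), recall that a $c$-John domain has a center $x_0$ such that every $x\in G$ is joined to $x_0$ by a rectifiable arc $\gamma$ with $\ell(\gamma[x,z])\le c\,d(z,\delta G)$ for all $z\in\gamma$ (for the two-point definition of $c$-John one argues as in (3) below, with lengths in place of diameters). Given $p,q\in G$, take John arcs $\alpha\colon p\curvearrowright x_0$ and $\beta\colon q\curvearrowright x_0$ and let $\gamma=\alpha\ast\beta^{-1}$ join $p$ to $q$ through $x_0$; let $\Gamma$ be the family of all such curves. Let $x_1,x,x_2$ be an ordered triple on $\gamma$ and $y\in\delta G$. If the middle point $x$ lies on $\alpha$, then the earlier point $x_1$ lies on $\alpha[p,x]$, so $|x_1-x|\le\ell(\alpha[x_1,x])\le\ell(\alpha[p,x])\le c\,d(x,\delta G)\le c|x-y|$; if $x$ lies on $\beta^{-1}$, then the later point $x_2$ lies on $\beta[q,x]$ and $|x_2-x|\le\ell(\beta[q,x])\le c\,d(x,\delta G)\le c|x-y|$. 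In either case one of $x_1,x_2$ is within $c|x-y|$ of $x$, whence $\min_j|x_j-y|\le(c+1)|x-y|$, so $G$ has the minimizing property with constant $c+1$.

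Implication (3) is the same bookkeeping: if $\gamma$ witnesses the diameter $c_1$-John condition for the pair $p,q$ (let $\Gamma$ collect these curves over all pairs), then for an ordered triple $x_1,x,x_2$ on $\gamma$ and $y\in\delta G$ the inequality $\min\{\operatorname{diam}\gamma[p,x],\operatorname{diam}\gamma[x,q]\}\le c_1\,d(x,\delta G)\le c_1|x-y|$ bounds $|x_1-x|$ when the first term is smaller (since then $x_1\in\gamma[p,x]$) and $|x_2-x|$ otherwise, giving $\min_j|x_j-y|\le(c_1+1)|x-y|$. The substantive step is (2). Assume $G$ has the minimizing property with family $\Gamma$ and constant $c$; for $p,q\in G$ take the curve $\gamma\in\Gamma$ from Definition \ref{minppty}, fix $x\in\gamma$, write $d=d(x,\delta G)$, and pick $y_x\in\delta G$ with $|x-y_x|=d$. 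If both $\operatorname{diam}\gamma[p,x]$ and $\operatorname{diam}\gamma[x,q]$ were larger than $2(c+1)d$, the $\operatorname{diam}\le2\sup$ bound would give $w\in\gamma[p,x]$ and $w'\in\gamma[x,q]$ with $|w-x|>(c+1)d$ and $|w'-x|>(c+1)d$; since $(w,x,w')$ is an ordered triple on $\gamma$, the minimizing inequality at $y_x$ would force $\min\{|w-y_x|,|w'-y_x|\}\le cd$, contradicting $|w-y_x|\ge|w-x|-d>cd$ and $|w'-y_x|>cd$. Hence $\min\{\operatorname{diam}\gamma[p,x],\operatorname{diam}\gamma[x,q]\}\le2(c+1)\,d(x,\delta G)$ for every $x\in\gamma$, so $G$ is diameter $2(c+1)$-John.

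The main obstacle is not a hard estimate but two points of care. First, one must fix the precise meaning of "$c$-John" from \cite{quasiworld} — length versus diameter, center versus two-point — and then carefully track which endpoint of the ordered triple falls in which half of $\gamma$; this bookkeeping is essentially all that (1) and (3) amount to. Second, in (2) one must choose exactly the right triple to feed into the minimizing inequality: the test point $x$ has to be the \emph{middle} term and the outer terms pushed far from $x$, tested against the nearest boundary point $y_x$ — any weaker choice fails to produce the diameter bound. If "diameter John" is read with a distinguished center, one appends a short reduction: a two-point diameter $c_1$-John domain satisfies $\operatorname{diam}G\le 2c_1\sup_G d(\cdot,\delta G)$ — split a near-diametral curve at a point where the two halves have equal diameter — so any $x_0$ with $d(x_0,\delta G)$ comparable to $\sup_G d(\cdot,\delta G)$ functions as a center with a constant depending only on $c_1$.
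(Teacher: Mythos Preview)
Your proof is correct in substance but diverges from the paper in a couple of places. The paper takes ``$c$-John'' to mean the two-point cigar condition of Definition~\ref{john}, not the center/carrot condition of Definition~\ref{bddjohn}, so your primary argument for (1) via concatenated carrot arcs to a center $x_0$ is not the relevant one here --- an unbounded length-John domain in a Banach space need not have any center. Your parenthetical (``for the two-point definition one argues as in (3) with lengths in place of diameters'') is the proof the theorem actually requires, and it is correct; in fact it is slightly slicker than what the paper does. The paper's Lemma~\ref{lemma1} first establishes, via a three-case analysis on the position of $x_1,x_2$ relative to the arclength midpoint $w$ of $\gamma$, that every subarc of a $c$-cone arc is again a $c$-cone arc, and only then applies this at $x$ on the subarc $\gamma[x_1,x_2]$; your direct use of $x_1\in\gamma[p,x]$, $x_2\in\gamma[x,q]$ bypasses this. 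Your (2) matches the paper's Lemma~\ref{lemma2}, which phrases the same contradiction as a ball-containment claim $\gamma[x_j,x]\subset B(z,2c\,\delta_G(x))$ for one $j$ and obtains $c'=4c$ rather than your $2(c+1)$. One small slip: you write $|x-y_x|=d(x,\delta G)$, but in a general Banach space the boundary distance need not be attained; the paper takes $|x-z|\le 2\delta_G(x)$ instead, which only affects the constant. Finally, the paper presents the proof as just the two lemmas and does not write out your implication (3) explicitly, and your closing paragraph on reducing to a centered diameter-John notion is unnecessary under the paper's two-point convention.
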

As a consequence, we partially answer Question \ref{qn} as follows:
\begin{theorem}\label{thm2}
Let $G$ and $G'$ be proper domains in Banach spaces $E$ and $E'$, respectively. Suppose that $G$ is $c$-John domain and that $f:\bar{G} \rightarrow \bar{G'}$ is a homeomorphism that maps $G$ onto $G'$ which is $\eta $ quasisymmetric relative to $\delta G$ (see Definition \ref{qsdefn}). 
Then $G'$ is diameter $c'$-John domain with $c'=c'(c,\eta)$.
\end{theorem}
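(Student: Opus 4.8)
The plan is to route the argument entirely through Theorem~\ref{thm1}: use it once to replace the John hypothesis on $G$ by the minimizing property, transport the minimizing property across $f$, and then use Theorem~\ref{thm1} again on $G'$ to produce the diameter John conclusion. So I would start by applying the implication in Theorem~\ref{thm1} to the $c$-John domain $G$, obtaining a family $\Gamma$ of curves in $G$ and a constant $b=b(c)$ such that every pair of points of $G$ is joined by some $\gamma\in\Gamma$ along which
\[
\min_{j=1,2}|x_j-y|\le b\,|x-y|
\]
for every ordered triple $x_1,x,x_2\in\gamma$ and every $y\in\delta G$.

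Next I would push this family forward by $f$. Since $f:\bar G\to\bar G'$ is a homeomorphism taking $G$ onto $G'$, it takes $\delta G$ onto $\delta G'$ and takes each $\gamma\in\Gamma$ to a curve $f\circ\gamma$ in $G'$, with betweenness of points along the curve preserved; put $\Gamma'=\{f\circ\gamma:\gamma\in\Gamma\}$. Given $x_1',x_2'\in G'$, write $x_i'=f(x_i)$ and pick $\gamma\in\Gamma$ joining $x_1,x_2$ with the property above, so that $f\circ\gamma\in\Gamma'$ joins $x_1',x_2'$. For an ordered triple $x_1',x',x_2'$ on $f\circ\gamma$ and a point $y'\in\delta G'$, pull everything back by $f^{-1}$ to an ordered triple $x_1,x,x_2$ on $\gamma$ and a point $y\in\delta G$, and assume without loss of generality that $|x_1-y|=\min_{j}|x_j-y|\le b\,|x-y|$. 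Because the distinguished point $y$ lies in $\delta G$, the hypothesis that $f$ is $\eta$-quasisymmetric relative to $\delta G$ yields
\[
|f(x_1)-f(y)|\le \eta(b)\,|f(x)-f(y)|,
\]
that is, $\min_{j}|x_j'-y'|\le \eta(b)\,|x'-y'|$. Thus $G'$ has the minimizing property with constant $b'=\eta(b(c))$.

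Finally I would invoke the equivalence in Theorem~\ref{thm1} in the opposite direction, applied to the proper domain $G'\subset E'$ (so that $\delta G'\ne\emptyset$ and the relevant notions are non-vacuous): having the minimizing property with constant $b'$ forces $G'$ to be diameter $c'$-John with $c'=c'(b')=c'(c,\eta)$, which is the assertion.

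Most of this is bookkeeping; the one genuinely delicate point is the exact meaning of ``$\eta$-quasisymmetric relative to $\delta G$'', since the argument uses precisely the pointed quasisymmetry estimate $|a-x|\le t\,|a-z|\Rightarrow |f(a)-f(x)|\le \eta(t)\,|f(a)-f(z)|$ with the \emph{boundary} point $a=y\in\delta G$ as center, so I would record that definition at the outset and verify it is the one in force. The only other thing to watch is the quantitative bookkeeping --- each application of Theorem~\ref{thm1} must be used with its explicit constants so that the final $c'$ depends only on $c$ and $\eta$ --- but this is automatic from the quantitative nature of Theorem~\ref{thm1}. I do not expect any real analytic obstacle: all the substance sits in Theorem~\ref{thm1}, and Theorem~\ref{thm2} follows formally once the minimizing inequality is seen to be stable under boundary-pointed quasisymmetry.
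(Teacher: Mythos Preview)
Your proposal is correct and follows essentially the same route as the paper: apply Theorem~\ref{thm1} to get the minimizing property on $G$, push the curve family forward by $f$, use $\eta$-quasisymmetry rel $\delta G$ with the boundary point as the base of the triple to transport the minimizing inequality to $G'$, and then invoke Theorem~\ref{thm1} again. Your extra care in recording that the relevant triple has its base point in $\delta G$ (so that the relative quasisymmetry hypothesis applies) is exactly the one point the paper leaves implicit.
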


We prove that the weak minimizing property characterizes distance John domains. Note that the class of diameter John domains is properly contained in the class of distance John domain, see \cite{nakki-vaisala-1991}.
\begin{theorem}\label{thm3}
Let $G\subset E$ be a domain, where $E$ is a Banach space of dimension $n\geq 2$. 
Then $G$ is distance $c$-John domain if, and only if $G$ satisfies the weak minimizing property.
\end{theorem}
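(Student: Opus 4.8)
The plan is to prove both implications by elementary metric estimates: the weak minimizing property of Definition~\ref{weakminppty} and the defining inequality of a distance John domain differ only by an application of the triangle inequality together with the identity $d(z,\delta G)=\inf_{y\in\delta G}|z-y|$. I shall keep the \emph{same} curve family $\Gamma$ throughout, changing the constant only by an additive $1$ at each step, which also yields the quantitative dependence. Recall that $G$ is a distance $c$-John domain when there is a family $\Gamma$ of curves in $G$ such that every pair $x_1,x_2\in G$ is joined by some $\gamma:x_1\curvearrowright x_2$ in $\Gamma$ with $\min(|x_1-z|,|x_2-z|)\le c\,d(z,\delta G)$ for all $z\in\gamma$.

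For ``distance John $\Rightarrow$ weak minimizing'', fix such a $\gamma$ for a pair $x_1,x_2$, and fix $z\in\gamma$ (playing the role of the point $x$ in \eqref{weakmineqn}) and $y\in\delta G$. Since $d(z,\delta G)\le|z-y|$, the defining inequality gives $\min(|x_1-z|,|x_2-z|)\le c|z-y|$; say the minimum is attained at $x_1$ (the case of $x_2$ is symmetric). Then $|x_1-y|\le|x_1-z|+|z-y|\le(c+1)|z-y|$, so $\min_{j=1,2}|x_j-y|\le(c+1)|z-y|$, which is \eqref{weakmineqn} with constant $c+1$. Hence $G$ has the weak minimizing property with the same $\Gamma$ and constant $c+1$.

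For the converse, assume $G$ has the weak minimizing property with family $\Gamma$ and constant $c$. Fix $x_1,x_2\in G$, let $\gamma:x_1\curvearrowright x_2$ be the curve supplied by Definition~\ref{weakminppty}, and fix $z\in\gamma$. Choose boundary points $y_k\in\delta G$ with $|z-y_k|\to d(z,\delta G)$. By \eqref{weakmineqn}, $\min(|x_1-y_k|,|x_2-y_k|)\le c|z-y_k|$ for each $k$; by the pigeonhole principle one endpoint, say $x_1$, realises this minimum for infinitely many $k$, and along that subsequence $|x_1-z|\le|x_1-y_k|+|y_k-z|\le(c+1)|z-y_k|\to(c+1)\,d(z,\delta G)$. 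Therefore $\min(|x_1-z|,|x_2-z|)\le(c+1)\,d(z,\delta G)$ for every $z\in\gamma$, and the same $\gamma$ shows that $G$ is a distance $(c+1)$-John domain.

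There is no substantial obstacle here: the statement is essentially a dictionary translation between two formulations of the same ``distance double-carrot'' condition, and the work is bookkeeping of constants ($c\mapsto c+1$ in each direction) and of the family $\Gamma$, which is carried through unchanged. The one point requiring a little care is that in a general Banach space $d(z,\delta G)$ need not be realised by a nearest boundary point, which is why the converse uses a minimising sequence $y_k$ and a pigeonhole choice of the endpoint achieving the minimum rather than simply passing to a limit; in particular no compactness or finite dimensionality is needed. (Should the definition of distance John domain instead be phrased via a single centre, the forward implication goes through by concatenating two carrots at the centre, while the converse additionally requires extracting a centre, e.g.\ a point of near-maximal distance to $\delta G$; the governing estimates are the ones above, and that extraction would be the genuinely delicate step.)
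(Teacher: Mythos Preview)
Your proof is correct and follows essentially the same route as the paper: both directions are handled by the triangle inequality together with an approximation of $\delta_G(z)$ by boundary points, keeping the same curve family $\Gamma$. The only difference is cosmetic: in the converse the paper picks a single boundary point with $|x-z|\le 2\,\delta_G(x)$ and obtains the constant $2(c+1)$, whereas your minimising-sequence plus pigeonhole argument avoids the factor $2$ and yields the sharper constant $c+1$.
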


Similar to Theorem \ref{thm2} we get the following result corresponding to distance John domains.

\begin{corollary}\label{thm4}
If $f:\bar{G} \rightarrow \bar{G'}$ is $\eta$ quasisymmetric relative to $\delta G$, maps $G$ onto $G'$ and $G$ is distance $c$- John domain then $G'$ is distance $c'$- John domain, where $c'=c'(c,\eta)$.
\end{corollary}

Since, there are diameter John domains which are not length John, it is reasonable to ask when does a diameter John domain is length John. We answer this as follows:
\begin{theorem}\label{thm5}
Let $G\subsetneq E$ be a $c-$ John domain then every pair of points can be joined by an arc $\gamma $ which satisfies the $\psi-$natural condition. 
Conversely, if $G\subsetneq E$ is a diameter $c-$John domain such that any two points in $G$ can be joined by diameter $c-$John arc which satisfies the $\psi-$natural condition. 
Then $G$ is length $c'-$John for some constant $c'=c'(c,\psi)$.
\end{theorem}
As an immediate consequence of Theorem \ref{thm2} and Theorem \ref{thm5}, we obtain the following result:
\begin{corollary} \label{cor1}
Let $G$ be a $c-$John domain and $f:\bar{G} \rightarrow \bar{G'}$ be an $\eta-$quasisymmetric relative to $\delta G$ which maps $G$ to $G'$. Further, if $G'$ is $\psi-$natural then, $G'$ is $c'-$John, where $c'=(c, \eta, \psi)$. 
\end{corollary}
As an application of Theorem \ref{thm5}, we answer Question \ref{qn} as follows.
\begin{theorem} \label{thm6}
Let $G$ be a length John domain and that $f:G \rightarrow G' $ is $(M, C)-$CQH. 
If $f$ extends to a homeomorphism $\bar{f}:\bar{G} \rightarrow \bar{G'}$ which is $\eta-$quasisymmetric relative to $\delta G$, then $G'$ is length John domain. 
\end{theorem}
Since every quasisymmetric map $f:G \rightarrow G'$ can be extended to the boundary such that  $\bar{f}:\bar{G} \rightarrow \bar{G'}$ is quasisymmetric (see \cite[Theorem 6.12]{quasiworld})and every quasisymmetric map is $(M, C)-$CQH (see  \cite[Theorem 7.17]{quasiworld},  \cite[Theorem 4.14]{va2}) Theorem \ref{thm6} is an improvement of the result of Li, Vuorinen, and Zhou \cite[Theorem 1.2]{vuorinen}.
 
\section{Preliminary Results}
In this section we present the basic definitions and preliminary results.

\subsection{Notations}
The norm of a vector $x$ in $E$ is denoted as $|x|$ and $[x,y]$ denotes the closed line segment with endpoints $x$ and $y$.\\[2mm]
For a set $D\subset E$, $\bar{D}$ denotes the closure of $D$ and $\delta D = \bar{D}\setminus D$ be its boundary in the norm topology. 
For an element $x\in D$, let $\delta_D(x)$ denote the distance of the point $x$ to the boundary of $D$, {\it i.e.,} $\delta_D (x) = \inf \left\{|x-y| : y\in \delta D\right\}$. For a bounded set $D$ in $E$, diam($D$) is the diameter of  $D$. 
A  curve is a continuous function $\gamma : [a,b] \rightarrow E$. 
The length of $\gamma$ is defined by
$$ l(\gamma ) = \sup \left\{\sum_{i=1}^n |\gamma (t_i)- \gamma (t_{i-1}| \right\},$$
where the supremum is taken over all partitions $ a=t_0 <t_1< \cdots < t_n=b.$

\subsection{John domains} \label{subjohn}
\begin{definition}\label{john}
Let $G \subsetneq E$ be a domain and let $c\geq 1$. Then $G$ is called $c$-John, if each pair of points $x_1, x_2$ in $G$ can be joined by a curve $\gamma$ in $G$ satisfying the following:
\begin{equation}\label{johncondition}
\min_{j=1,2} l(\gamma[x_j,x]) \leq c \delta_G (x)\text{ for all } x\in \gamma ,
\end{equation}

where $\gamma[x_j,x]$ denotes the part of the curve $\gamma$ from $x_j$ to $x$. We say that such a curve is a $c$-cone arc. This definition is known as the cigar definition of John domains.

\end{definition}
If the length of the curve in \eqref{johncondition} in the Definition \ref{johncondition} is replaced by diam$\left(\gamma[x_j,x]\right)$ then the domain is said to be diameter $c$- John domain. We say that $G$ is distance $c$-John if \eqref{johncondition} is replaced by
$$\min_{j=1,2} |x_j - x| \leq c \delta_G (x),$$
for all $x \in \gamma $.

\begin{definition}\label{bddjohn}
For $c\geq 1$, a non complete metric space $(D,d)$ is called $c$-John domain, if there is $x_0 \in D$, called the John center of $D$, such that every point $x$ in $D$ can be joined to $x_0$ by a curve $\alpha $ satisfying 
$$ l\left(\alpha[x,z]\right) \leq c \delta_D (z),$$
for each $z \in \alpha $, and we say that $\alpha $ is a $c$-carrot arc.
\end{definition}

We define diameter $c$-John with center and distance $c$-John with center similarly as we have defined for general John domains.

\begin{remark}
It is easy to see that if $D$ is a John domain with center $x_0$ then $D\subset B(x_0, c\delta_D(x_0)$. In particular every John domain with center $x_0$ is bounded. If the domain $D \subset \mathbb{R}^n$ is bounded then both definitions \ref{john} and   \ref{bddjohn} are quantitatively equivalent, (see \cite{vaisalaunif}). In this article, we  focus primarily on domains that are John in the cigar sense. 
\\[2mm]
It is easy to see that length $c$-John $\implies$ diameter $c$-John $\implies$ distance $c$-John. 
In $\mathbb{R}^n$, Martio and Sarvas \cite{ms} have shown that for a  John domain $D \subset \mathbb{R}^n$ with center $x_0$ diameter $c$-John implies length $c$-John. In 1991,N{\"a}kki and V{\"a}is{\"a}l{\"a} \cite{nakki-vaisala-1991} have showed that this result still holds in the case of unbounded domains in $\R^n$.
In \cite{vaisalaunif}, V{\"a}is{\"a}l{\"a} proved that for a bounded domain $D \subset \mathbb{R}^n$, diameter $c$-John and distance $c$-John are quantitatively equivalent. 
N{\"a}kki and V{\"a}is{\"a}l{\"a} \cite{nakki-vaisala-1991} showed by an example that this is not true in unbounded domains.
V{\"a}is{\"a}l{\"a} \cite{vaisala-2004} constructed an example of a diameter John domain in infinite dimensional Hilbert space which is not a length John domain.
\end{remark}

\subsection{Quasihyperbolic metric}
Quasihyperbolic metric was first introduced by Gehring and Palka \cite{gehpal} for proper domains in $\R^n$. It is a very crucial tool in studying quasiconformality in Banach spaces.
\begin{definition}
Let $G\subsetneq E$ be a domain in a Banach space and let $\gamma $ be a rectifiable arc then the quasihyperbolic length of $\gamma $ is defined as the number(see \cite{va1})
$$l_k(\gamma) = \int_{\gamma} \frac{|dz|}{d_G(z)}.$$
For each pair of points $x,y\in G$ the quasihyperbolic distance denoted by $$k_G(x, y) = \inf_{\gamma} l_k(\gamma),$$
where the infimum runs over all arcs $\gamma$ joining $x$ and $y$.
\end{definition}
We shall usually abbreviate $d(z)=d_G(z)$, $k(x, y)= k_G(x, y)$ and $k'(x, y)= k_{G'}(x, y)$ where $G'\subsetneq E$ is a domain.
We also recall some of the elementary inequalities regarding quasihyperbolic metric:
\be \label{qh1}
\left|\log\left(\frac{d(x)}{d(y)}\right)\right| \leq k(x,y) \text{ for every } x,y \text{ in } G.
\ee
More generally, we have
\be \label{qh2}
\log\left(1+ \frac{l(\gamma)}{\min\{d(x), d(y)\}}\right) \leq l_k(\gamma).
\ee
\begin{definition}
Let $\psi :[0, \infty) \rightarrow [0, \infty)$ be an increasing function. We say that a curve $\gamma$ satisfies the $\psi-$natural condition if the following holds:
\be \label{nat}
\diam_k(\gamma[x,y]) \leq \psi\left( \frac{\diam(\gamma[x,y])}{\dist(\gamma[x,y], \delta G)}\right) \text{ for every } x,y \in \gamma
\ee
where $\diam_k$ means the diameter under the quasihyperbolic metric.\\
A domain $G\subsetneq E$ is said to be $\psi-$natural if \eqref{nat} holds for every connected subset $A\subset G$.
\end{definition}
We remark that the class of natural domains is fairly large, in fact each domain in a finite dimensional Banach space is $\psi-$natural with $\psi$ depending only on the dimension (see\cite{va3}). 
V{\"a}is{\"a}l{\"a} has constructed an example of a broken tube domain in infinite dimensional Hilbert space which is not natural (see \cite{vaisala-2004}).
\subsection{Maps in freequasiworld}
Let $X$ be a metric space. By a triple in  $X$  we mean an ordered sequence $T=(x,y,z)$ of three distinct points in $X$. The ratio of $T$ is the number
$$\rho (T) = \frac{|y-x|}{|z-x|}.$$
If $f:X \rightarrow Y$ is an injective map, the image of a triple $T=(x,y,z)$ is the triple $f(T)=(f(x),f(y),f(z)).$ Suppose that $A\subset X$. A triple $T=(x,y,z)$ in $X$ is said to be a triple in the pair $(X,A)$ if $x\in A$ or if $\{y, z\} \subset A$. 
Equivalently, both $|y-x|$ and $|z-x|$ are distances from a point in $A$.

\begin{definition} \label{qsdefn}
Let $X$ and $Y$ be metric spaces an embedding $f:X \rightarrow Y$ is said to be $\eta$-quasisymmetric if there is a homeomorphism $\eta: [0, \infty ) \rightarrow [0, \infty )$ such that
\begin{equation}\label{qs}
\rho(fT) \leq \eta(\rho(T))
\end{equation}
holds for each triple $T$ in $X$.\\[2mm]
Let $A \subset X$, then $f$ is said to be $\eta $ -quasisymmetric relative to $A$, abbreviated $\eta$-QS rel $A$, if the condition \eqref{qs} holds for every triple $T$ in $(X,A)$. 
Thus quasisymmetry rel $X$ is equivalent to ordinary quasisymmetry.
\end{definition}
Note that an embedding $f:X \rightarrow Y$ is $\eta $-QS rel $A$ if, and only if, $\rho(T) \leq t$ implies that $\rho(f(T)) \leq \eta (t)$ for each triple $T$ in $(X, A)$ and $t\geq 0$.\\[2mm]
In 1999, V\"{a}is\"{a}l\"{a} \cite{quasiworld} proved  that \eqref{qs} implies $f$ is an embedding unless $f$ is a constant.
\begin{definition}
A homeomorphism $f:G \rightarrow G'$ between domains $G\subsetneq E$ and $G'\subsetneq E'$, where $E$ and $E'$ are Banach spaces, is $C-$coarsely $M-$quasihyperbolic if 
$$\frac{k(x, y)-C}{M} \leq k'(f(x), f(y)) \leq M k(x, y)+C \text{ for all } x, y \in G.$$
\end{definition}
\section{Main Results}\label{sec3}

The proof of Theorem \ref{thm1} is divided into the following two lemmas.

\begin{lemma}\label{lemma1}
Suppose $G\subset E$ is a domain, and if $\gamma \subset G$ is a $c-$cone arc, then $\gamma$ satisfies \eqref{mineqn} with constant $c'=1+c$.
\end{lemma}

\begin{proof}
Let  $\gamma$ be a $c$-cone arc connecting $u$ and $v$. For an ordered triple of points $x_1, x, x_2 \in \gamma$ and $y\in \delta G$, it suffices to verify \eqref{mineqn} for these points.
It is easy to see that the sub-arc  $\gamma[x_1, x_2]$ is again a $c$ -cone arc.
\\[2mm]
%
%
%
%
Therefore, we have, 
\begin{equation*}
\min_{j=1,2} |x_j -x| \leq \min_{j=1,2} l\left(\gamma[x_j , x]\right) \leq c \delta_G(x) \leq c|x-y|.
\end{equation*}
Thus, we obtain 

\be
\min_{j=1,2} |x_j -y| \leq \min_{j=1,2} |x_j -x| +|x-y| \leq (1+c) |x-y|.
\ee

\end{proof}

\begin{lemma}\label{lemma2}
Suppose that  $G \subset E $ is a domain and $\gamma \subset G$ be a curve which satisfies \eqref{mineqn} with some constant $c$, then $\gamma$ is diameter $c'$- John arc, where $c'=c'(c)$. Conversely, if $\gamma$ is a diameter $c-$ John arc then $\gamma$, then $\gamma$ satisfies \eqref{mineqn} with the constant $1+c$.
\end{lemma}
\begin{proof}
 Let $x_1, x_2 \in G$ and  $\gamma$ be a curve joining $x_1$ and $x_2$  satisfying \eqref{mineqn}. Let $x\in \gamma $, we choose a point $z\in \delta G$ such that $|x-z| \leq 2 \delta_G (x).$
 For $j=1$ or $j=2$, we claim that 
 \be \label{claim}
  \gamma[x_j,x] \subseteq B\left(z, 2c\delta_G(x)\right).
  \ee
  If it is not the case, then there exists $u_j \in \gamma[x_j, x]$ such that $|u_j -z |> 2c\delta_G (x)$ for $j=1,2,$
which implies that
$$c|x-z| < \min_{j=1,2} |u_j -z|,$$
which contradicts \eqref{minppty}.
From \eqref{claim}, it follows that 
$$ \min_{j=1,2} \text{diam }( \gamma[x_j,x]) \leq c' \delta_G (x),$$
where $c' = 4c$. Hence $\gamma $ is $c'-$cone arc.  \\
Conversely, if $\gamma$ is a diameter $c-$John arc joining $u, v\in G$,for an ordered triple of points $x_1, x, x_2 \in \gamma$ and $y\in \delta G$ the result follows from a similar argument as in Lemma \ref{lemma1}, since the subarc $\gamma[x_1, x_2]$ is again a diameter $c-$John arc.
\end{proof}
Now we are ready to prove Theorem \ref{thm1}.

\begin{proof}[Proof of Theorem \ref{thm1}]
Suppose that $G$ is a $c-$John domain.
Let
\begin{equation*}
\Gamma \coloneqq \left\{\gamma : x \curvearrowright y : \gamma \text{ is a $c$-cone arc joining $x$ and $y$ } \right\}.
\end{equation*}
Since $G$ is $c$-John, $\Gamma $ is not empty and for each $u,v \in G$ there exists a $c$-cone arc $\gamma \in \Gamma$ connecting $u$ and $v$. Then from Lemma \ref{lemma1} it follows that $\Gamma$ is the required family of curves and therefore $G$ has the minimizing property.\\
Now, suppose $G$ has the minimizing property with the family of curves $\Gamma$ and constant $c$. Then it follows from Lemma \ref{lemma2} that each $\gamma \in \Gamma$ is a diameter $c'-$John arc. Since for any $u, v\in G$ there exists a curve $\gamma \in \Gamma$ joining them we obtain that $G$ is diameter $c'$-John domain. \\
The reverse implication follows similarly from the arguments in the first implication. 

\end{proof}

\begin{proof}[Proof of Theorem \ref{thm2}]

Let $G$ and $G'$ be proper domains in Banach spaces $E$ and $E'$, respectively. Suppose $G$ has the minimizing property, $f:\bar{G} \rightarrow \bar{G'}$ is $\eta$ QS rel $\delta G$,  and maps $G$ onto $G'$. 
We show that $G'$ has the minimizing property.
Suppose $\Gamma $ is the family  of curves in $G$ such that for any two given points there exists a curve $\gamma \in \Gamma$ connecting them and satisfies \eqref{mineqn} with constant $c$ for any ordered triple of points $x_1, x, x_2 \in \gamma$ and $y\in \delta G$. We claim that $\Gamma ' = f(\Gamma)$ is the required family of curves in $G'$. \\[2mm]
Let $\gamma '$ be a curve in $\Gamma '$ and fix ordered triple of points   $x_1' = f(x_1), x'=f(x), x_2' = f(x_2)$ in  $\gamma '$ and $y'=f(y)$ be in $\delta G' $. Then $x_1, x, x_2 $ is an ordered triple of points in $\gamma$ and $y$ is in $\delta G$. Without loss of generality assume that $\min_{j=1,2} |x_j-y| = |x_1 - y|$. 
Then by the minimizing property of $G$ we obtain $|x_1 - y| \leq c |x-y|,$ which implies $|f(x_1)-f(y)| \leq \eta(c) |f(x) - f(y)|. $ 
Hence, $G'$ satisfies the minimizing property and Theorem \ref{thm2} follows from Theorem \ref{thm1}.
\end{proof}
\begin{remark} \label{remark}
Note that in the proof of Theorem \ref{thm2} we have actually proved that a map which is quasisymmetric relative to the boundary of a domain maps a curve with the minimizing property to a curve with the minimizing property with the constants depending only on the hypothesis. This together with Lemma \ref{lemma2} shows that the image of a diameter John arc under a quasisymmetric map relative to the boundary is again a diameter John arc with the constants depending only on the hypothesis. In particular, it implies that diameter John domains are invariant under quasisymmetric maps relative to the boundary. For Euclidean spaces, this leads to the following corollary.
\end{remark}
\begin{corollary}
If $D \subset \mathbb{R}^n $ is a  $c-$John domain and $f:\bar{D} \rightarrow \bar{D'} \subset \mathbb{R}^n$ is $\eta$-QS rel $\delta D$, then $D'$ is $c'-$ John domain with $c'=c'(c, \eta)$.
\end{corollary}

Now we provide the proof of Theorem \ref{thm3}.
\begin{proof}[Proof of Theorem \ref{thm3}]
Suppose that $G\subset E$ is a  distance $c$- John domain. Set 
$$ \Gamma = \left\{ \gamma: x_1 \curvearrowright x_2 : \gamma \text{ is the distance $c$-John arc } , x_1,x_2 \in G \right\}.$$
Fix $x_1,x_2 \in G$ and $\gamma: x_1 \curvearrowright x_2 $ in $\Gamma$. Then for $x\in \gamma$ and $y\in \delta G$, we have
\be 
\min_{j=1,2} |x_j - y| \leq \min_{j=1,2} |x_j - x| + |x-y| \leq c \delta_G (x) + |x-y| \leq (1+c) |x-y|.
\ee
Therefore, $G$ satisfies the weak minimizing property with the family of curves $\Gamma$ and constant $1+c$. 
\\[2mm]
Conversely, suppose $G$ has the minimizing property with $\Gamma '$ as the family of curves and $c'$ as the constant satisfying \eqref{weakmineqn}. Let $x_1,x_2 \in G$ and $\alpha : x_1 \curvearrowright x_2$ be the curve in $\Gamma '$.
For $x\in \alpha$, choose $z\in \delta G$ with $|x-z|\leq 2 \delta_G(x).$
Then by \eqref{weakmineqn}, we have

$$\min_{j=1,2} |x_j - x| \leq \min_{j=1,2} |x_j - z|+ |x-z| \leq (c+1)|z-x| \leq 2(c+1) \delta_G (x) .$$
Therefore, $G$ is distance $c'$- John, where $c'= 2(c+1)$.
\end{proof}
In light of Theorem \ref{thm3}, it suffices to prove the following lemma to prove Corollary \ref{thm4}.

\begin{lemma}
Suppose $G\subset E$ and $G' \subset E'$ be domains. Let $f: \bar{G} \rightarrow \bar{G'}$ be $\eta$-QS rel $\delta G$ and maps $G$ onto $G'$. Further, suppose that $G$ has the weak minimizing property, then $G'$ also has the weak minimizing property.
\end{lemma}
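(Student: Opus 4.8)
The plan is to imitate the proof of Theorem~\ref{thm2} almost verbatim, replacing the minimizing property by its weak counterpart and the ordered triple on the curve by a single point. I would let $\Gamma$ and $c$ witness the weak minimizing property of $G$, and propose $\Gamma' = f(\Gamma)=\{f\circ\gamma:\gamma\in\Gamma\}$ as the required family of curves for $G'$. First one records the cheap facts: since $f:\bar G\to\bar G'$ is a homeomorphism sending $G$ onto $G'$, it sends $\delta G=\bar G\setminus G$ onto $\delta G'=\bar{G'}\setminus G'$, and it sends each $\gamma\in\Gamma$ to a curve $\gamma'=f\circ\gamma$ in $G'$ with the correct endpoints.

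The substance is then a one-line transfer of inequalities. Given $x_1',x_2'\in G'$, set $x_j=f^{-1}(x_j')\in G$ and choose $\gamma:x_1\curvearrowright x_2$ in $\Gamma$ satisfying \eqref{weakmineqn} with constant $c$; then $\gamma'=f\circ\gamma$ joins $x_1'$ to $x_2'$. Fix $x'=f(x)\in\gamma'$ and $y'=f(y)\in\delta G'$, so that $x\in\gamma$ and $y=f^{-1}(y')\in\delta G$. After discarding the trivial case $x\in\{x_1,x_2\}$ (where $\min_{j}|x_j'-y'|\le|x'-y'|$ is immediate) and relabelling so that $\min_{j=1,2}|x_j-y|=|x_1-y|$, the weak minimizing inequality for $G$ reads $|x_1-y|\le c|x-y|$, that is, $\rho(T)\le c$ for the triple $T=(y,x_1,x)$ (with $\rho(T)=|x_1-y|/|x-y|$ in the paper's convention). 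Injectivity of $f$ makes $f(T)=(y',x_1',x')$ a genuine triple, and applying the hypothesis that $f$ is $\eta$-QS rel $\delta G$ — in the reformulated shape noted after the definition of quasisymmetry — yields $\rho(f(T))\le\eta(c)$, i.e. $|x_1'-y'|\le\eta(c)|x'-y'|$. Hence $\min_{j=1,2}|x_j'-y'|\le\eta(c)|x'-y'|$ for every $x'\in\gamma'$ and $y'\in\delta G'$, so $G'$ satisfies the weak minimizing property with family $\Gamma'$ and constant $\eta(c)$.

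The only step that genuinely uses the hypothesis — and the point I would flag — is the verification that $T=(y,x_1,x)$ is a \emph{triple in the pair} $(\bar G,\delta G)$, which is what licenses the use of relative quasisymmetry: this holds precisely because the anchor point $y$ lies in $\delta G$, matching the admissibility clause ``$x\in A$'' in the definition of a triple in $(X,A)$. Everything else is bookkeeping, and no property of $f$ beyond relative quasisymmetry and the homeomorphism property enters. I do not anticipate any real obstacle; the lemma is the weak-minimizing analogue of the transfer argument already carried out for the minimizing property in the proof of Theorem~\ref{thm2}, and combined with Theorem~\ref{thm3} it immediately gives Theorem~\ref{thm4}.
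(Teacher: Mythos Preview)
Your proposal is correct and follows essentially the same route as the paper: pull back the data via $f^{-1}$, apply the weak minimizing inequality in $G$, and push the resulting ratio forward using $\eta$-QS rel $\delta G$ to get the same inequality in $G'$ with constant $\eta(c)$. You are in fact a bit more careful than the paper, which omits the check that $T=(y,x_1,x)$ is a triple in $(\bar G,\delta G)$ and the distinctness issue at $x\in\{x_1,x_2\}$.
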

\begin{proof}
Fix $x_1', x_2' \in G'$ and $y' \in \delta G'$. Then there exist $x_1, x_2 \in G$, $y \in \delta G'$ such that $f(x_i)=x_i'$, for $i=1,2$ and $f(y)=y'$ . Further, there exists a curve $\gamma : x_1 \curvearrowright x_2 $ in $G$ and a constant $c$ such that for $x\in \gamma$,
$$ \min_{j=1,2}|x_j-y| \leq c|x-y|.$$
Without loss of generality, assume that $ \min_{j=1,2}|x_j-y| = |x_1 -y|$.
Thus by the quasisymmetry of $f$ we yield,
$$|x_1' - y'| \leq \eta(c) |f(x)-y'|.$$
Hence, it follows that $f(\gamma) $ is curve joining $x_1$ and $x_2$ satisfying \eqref{weakmineqn}.
Therefore, $G'$ has the weak minimizing property with constant $c'=\eta(c)$.
\end{proof}

\section{Proofs of Theorems \ref{thm5}, \ref{thm6} and Corollary \ref{cor1}}
We break down the proof of Theorem \ref{thm5} into two lemmas.
\begin{lemma}
Let $G\subsetneq E$ be a $c-$ John domain then every pair of points can be joined by an arc $\gamma $ which satisfies the $\psi-$natural condition.
\end{lemma}
\begin{proof}
Let $x_1, x_2 \in G$ and let $\gamma$ be the cone arc joining $x_1$ and $x_2$. We show that $\gamma $ satisfies the natural condition.
Let $x_0 \in \gamma $ be such that $l(\gamma [x_1, x_0]) = l(\gamma[x_2, x_0])$.
Set $\alpha_1 = \gamma [x_1, x_0]$ and $\alpha_2= \gamma[x_2, x_0]$.
\\[2mm]
If $\alpha_1  \subset \bar{B(x_1, d(x_1)/2)}$ then  we have
$$\frac{d(x_1}{2} \leq d(x_1)-|x_1 - x_0| \leq d(y) \leq d(x_1) +|x_0-x_1| \leq \frac{3}{2} d(x_1)$$
which yields
$$l_k(\alpha_1[x_1, x_0]) \leq \frac{2}{d(x_1)} l(\alpha_1[x_1, x_0]) \leq 2c \frac{d(x_0)}{d(x_1)} \leq 3c.$$
\\[2mm]
If $\alpha \nsubseteq \bar{B(x_1, d(x_1)/2)}$, then there is a point $w\in \alpha_1$ such that $l(\alpha_1[x_1, w]) = \frac{1}{2} d(x_1).$
Hence $l_k(\alpha_1[x_1, w] ) \leq 3c$.\\[2mm]
Next, if $\alpha_1 $is parametrized by arclength $s$ with $\alpha_1(0) = x_1$, then
$$s\leq l(\alpha_1 [x_1, z]) \leq c d(z)$$
and thus we obtain
$$ l_k \alpha_1[w,x_0]) = \int_{\alpha_1([w,y])} \frac{ds}{d(z)} \leq c \int_{\frac{d(x_1)}{2}}^{l(\alpha_1([x_1,x_0])} \frac{ds}{s} \leq c \log\left(\frac{2cd(x_0)}{d(x_1)} \right).$$
Therefore, we have
\begin{align}
    l_k \left(\alpha_1[x_1, x_0]\right) &= l_k\left(\alpha_1 [x_1, w]\right) + l_k \left(\alpha_1 [w, x_0]\right)  \nonumber \\
& \leq c \log \left(2c\frac{d(x_0)}{d(x_1)} \right) + 3c \nonumber \\
&= a \log \left( \frac{d(x_0)}{d(x_1)} \right) + b \label{nat1}
\end{align}
where $a =c$ and $b= c\log(2c)+3c $.\\
Similarly,
$$l_k(\alpha_2[x_2, x_0]) \leq a \log \left(\frac{d(x_0)}{d(x_2)}\right) + b.$$
Now from \eqref{nat1} we obtain,
\begin{align*}
\diam_k (\alpha_1[x_1, x_0]) & \leq l_k(\alpha_1[x_1, x_0]) \leq a \log \left(\frac{d(x_0)}{d(x_1)}\right) + b \\
& \leq a \log \left(1+ \frac{|x_1 -x_0|}{d(x_1)} \right) +b \\
& \leq a \log \left(1+ \frac{\diam (\alpha_1[x_1, x_0]}{\dist\left( \alpha_1, \delta G\right)} \right) +b
\end{align*}
which yields 
\be
\diam_k(\alpha_1[x_1,x_0]) \leq a \log \left( 1+ \frac{\diam(\gamma[x_1, x_2]}{\dist(\gamma, \delta G)} \right) +b
\ee
and similarly,
\be
\diam_k(\alpha_2[x_1,x_0]) \leq a \log \left( 1+ \frac{\diam(\gamma[x_1, x_2]}{\dist(\gamma, \delta G)} \right) +b.
\ee
Finally,
\begin{align*}
\diam_k(\gamma[x_1, x_2]) &\leq \diam_k(\alpha_1[x_1, x_0]) + \diam_k(\alpha_2[x_2,x_0]) \\
& 2c \log\left( 1+ \frac{\diam(\gamma[x_1, x_2]}{\dist(\gamma, \delta G)} \right) +2b.
\end{align*}
Now the proof follows from the fact that every sub-arc of a $c-$cone arc is again a $c-$cone arc.
\end{proof}

\begin{lemma}
Let $G\subsetneq E$ be a diameter $c-$John domain such that any two points in $G$ can be joined by diameter $c-$John arc which satisfies the $\psi-$natural condition. Then $G$ is length $c'-$John for some constant $c'=c'(c,\psi)$.
\end{lemma}
\begin{proof}
Let $x, y \in G$ and $\gamma $ be a diameter $c-$ John arc joining $x$ and $y$ satisfying the $\psi$-natural condition. Further, let $x_0$ be the midpoint of $\gamma$. That is, 
\be \label{big1}
\diam \left(\gamma[x, x_0]\right) = \diam \left(\gamma[x_0, y]\right) \leq c d(x_0).
\ee
We need to show that there is a $c'-$cone arc joining $x$ and $y$. By symmetry it suffices to show that there exists an arc $\sigma$ joining $x$ to $x_0$ satisfying 
\be \label{big0}
l\left(\sigma[x, z]\right) \leq c' d(z)\text{ for every }z\in \sigma.
\ee 
Set $\alpha = \gamma[x, x_0].$
We divide the proof into two cases.\\
Case 1: $2d(x) \geq d(x_0)$.
\\
Clearly, 
\be \label{big2}
\frac{d(x_0)}{2} \leq  d(x) \leq d(z) + |z-x| \leq (1+c) d(z) \text{ for every } z\in \alpha.
\ee
From the naturality of $\gamma$, \eqref{big1} and \eqref{big2}, it follows that 
\be \label{big3}
k(x,x_0) \leq \diam_k(\alpha) \leq \psi\left(\frac{\diam(\alpha)}{\dist(\alpha, \delta G)}\right) \leq \psi(2c(1+c)).
\ee
Hence there exists a curve $\sigma$ joining $x$ and $x_0$ such that 
$$l_k(\sigma) \leq 2k(x,x_0) \leq 2\psi(2c(1+c)):= A$$
Then for all $z\in \sigma $ we obtain
$$\left| \log \left(\frac{d(z)}{d(x_0)}\right)\right| \leq k(z,x_0) \leq l_k(\sigma) \leq A.$$
Therefore, 
$$ e^{-A} d(x_0) \leq d(z) \leq e^{A} d(x_0) \text{ for every } z\in \sigma$$
which together with \eqref{qh2} implies that
\be \label{big4}
l(\sigma) \leq e^{l_k(\sigma)} d(x_0) \leq e^{A}d(x_0) \leq e^{2A}d(z) \text{ for every } z\in \sigma
\ee
which is the desired result \eqref{big0}.\\[2mm]
Case 2: $2d(x) < d(x_0)$.\\
Let $m$ be the least integer with $m\geq \log_2\left(\frac{d(x_0)}{d(x)}\right)$. \\
Our aim is to find points $x=x_1, x_2,...,x_m,x_{m+1}=x_0$ in the curve $\alpha$ and construct arcs $\beta_i$ between them having uniformly bounded quasihyperbolic length.
Let $x_2\in \alpha $ be the first point such that $d(x_2)= 2d(x_1) <d(x_0)$ while travelling through the arc from $x$ to $x_0$.\\[2mm]
Now for every $z\in \alpha[x_1, x_2]$, we obtain
\be \label{big5}
d(x_1) \leq d(z)+|x_1 -z| \leq (1+c)d(z).
\ee
It is easy to see that,
\be \label{big6}
\diam\left(\alpha[x_1, x_2]\right) \leq cd(x_2) = 2cd(x_1)
\ee
From \eqref{big5}, \eqref{big6} and by the naturality of the curve $\gamma$, we obtain
$$ k(x_1, x_2) \leq \diam_k(\alpha[x_1, x_2]) \leq \psi\left(\frac{\diam(\alpha[x_1, x_2])}{\dist(\alpha[x_1, x_2], \delta G)}\right)\leq \psi (2c(1+c)) .$$
Therefore, there exists a curve $\sigma_1$ joining $x_1$ and $x_2$ such that 
$$l_k(\sigma_1)\leq 2k(x_1, x_2) \leq 2\psi (2c(1+c)):=A.$$
Therefore, a similar calculation as in Case 1, we obtain
$$
\left|\log\left(\frac{d(z)}{d(x_1)}\right)\right| \leq k(z, x_1) \leq l_k(\sigma_1) \leq A \text{ for every } z\in \sigma_1$$
which yields
$$ l(\sigma_1) \leq e^A d(x_1).$$ \\
Now if $2d(x_2) \geq d(x_0) $, we construct a curve $\sigma_2$ joining $x_2$ and $x_3=x_0$ as in Case 1 with $l(\sigma_2)\leq e^A d(x_2)$. 
Otherwise, we continue this process letting $x_3 \in \alpha $ be the first point such that $d(x_3)=2d(x_2)$.
 Since any subarc of a diameter $c-$John arc is again a diameter $c-$John arc a similar calculation  as before yields us an arc $\sigma_2$ joining $x_2$ and $x_3$ with $l(\sigma_2)\leq e^A d(x_2)$. 
We continue this process by letting $x_{i+1}$ as the first point of $\alpha$ with $d(x_{i+1})=2d(x_i)$ and thereby obtain an arc $\sigma_i$ joining $x_i$ to $x_{i+1}$ with \be \label{big7}
l(\sigma_i)\leq  e^Ad(x_i).
\ee
Since, $d(x_i)= 2^{i-1} d(x_1)$ the process will end as soon as $i \geq \log_2\left(d(x_0)/d(x_1)\right)$.
Let $x_m \in \alpha $ be the point such that $2d(x_m) \geq d(x_0) = d(x_{m+1})$. Also 
$$\diam (\alpha[x_m, x_0]) \leq c d(x_0) \leq 2c d(x_m)$$
and 
$$ d(x_m) \leq d(z) + |x_m - z| \leq (1+c) d(z).$$
Therefore, we have 
$$k(x_m, x_0) \leq \psi(2c(1+c)).$$
Choose an arc $\sigma_m$ joining $x_m$ to $x_{m+1}$ satisfying
\be
l_k(\sigma_m)\leq  2\psi(2c(1+c)).
\ee
Similarly as in Case 1 we obtain 
\be  \label{big8}
l(\sigma_m)\leq  e^Ad(x_m).
\ee
We claim that $\sigma = \cup_{i=1}^m \sigma_i$ is a cone arc.\\
Let $z\in \sigma $ then there is $1\leq j \leq m$ so that $z\in \sigma_j$. 
Further,
$$\left| \log\left(\frac{d(x_j)}{d(z)}\right)\right| \leq k(z,x_j) \leq l_k(\sigma_j) \leq A.$$
Therefore, we obtain, 
\be \label{big9}
d(x_j) \leq e^Ad(z).
\ee
From \eqref{big7}, \eqref{big8} and \eqref{big9} it follows that 
$$l\left(\sigma[x,z]\right) \leq \sum_{i=1}^j l(\sigma_j) \leq e^{A} \sum_{i=1}^j d(x_j) \leq 2e^{A} d(x_j) \leq 2e^{2A} d(z),$$
which implies \eqref{big0}.
This completes the proof.

\end{proof}

\begin{proof}[Proof of Corollary \ref{cor1}]
By Theorem \ref{thm2} we obtain that $G'$ is diameter $c'-$John domain with $c'(c, \eta)$. Since $G'$ is $\psi-$natural every diameter $c'-$John curve satisfies the natural condition. Hence by Theorem \ref{thm5} it follows that $G'$ is $c''-$John where $c''=c''(c', \psi)$.
\end{proof}
Now, we are ready to prove Theorem \ref{thm6}.
\begin{proof}[Proof of Theorem \ref{thm6}.]
Let $x'=f(x)$ and $y'=f(y)$ be any two points in $G'$. Since, $x,y \in G$ there exists a $c-$cone arc $\alpha$ joining $x$ and $y$. Clearly, $\alpha $ is a diameter $c-$John arc and by Theorem \ref{thm5} $\alpha $ satisfies the $\psi-$natural condition for some increasing function $\psi :[0, \infty) \rightarrow [0, \infty)$. 
By Theorem \ref{thm5} it is enough to prove that $\alpha' = f(\alpha)$ is a diameter $c'-$ John arc and it satisfies $\psi'-$natural condition for some increasing function $\psi':[0, \infty) \rightarrow [0, \infty)$.\\[2mm]
We see from Remark \ref{remark} that $\alpha'$ is diameter $c'-$John. 
Before we prove that $\alpha'$ satisfies the natural condition we claim that for any connected set $A' \subset G'$ with $\dist(A' \delta G')>0$, we have
\be \label{nat3}
\frac{\diam (A)}{\dist(A, \delta G)} \leq 6 \mu' \left(\frac{\diam (A')}{\dist(A', \delta G')}\right)
\ee
where $A' = f(A)$, $\mu'(t)=1+\eta'(1+t)$ and $\eta'(t) = \left(\eta^{-1}(t^{-1}\right)^{-1}$.\\
Let $x\in A$ and $z\in \delta G$ be two points such that $|x-z| \leq 2\dist\left(A,\delta G\right)$. Choose $y\in A$ such that $\diam(A)\leq 3|x-y| $. 
Since $f^{-1}$ is $\eta'$-quasisymmetric relative to $\delta G'$ with $\eta'(t) = \left(\eta^{-1}(t^{-1}\right)^{-1}$ we obtain
$$ \frac{\diam (A)}{\dist(A, \delta G)} \leq 6 \frac{|x-y|}{|x-z|} \leq 6\left(1+ \eta'\left(\frac{|y-z|}{|x-z|}\right)\right) \leq 6\left(1+ \eta'\left(1+\frac{\diam (A')}{\dist(A', \delta G')}\right)\right), $$
which is the desired result.
It follows from \eqref{nat3} and from the fact that $f$ is $(M, C)-$CQH that for $u', v' \in \alpha'$, we have
\begin{align*}
\diam_{k'}(\alpha'[u', v']) &\leq M \diam_k(\alpha[u, v]) + C \\
& \leq M \psi\left(\frac{\diam (\alpha[u, v])}{\dist(\alpha[u, v], \delta G)}\right) +C \\
& \leq M \psi\left(6\mu'\left(\frac{\diam (\alpha'[u', v'])}{\dist(\alpha[u', v'], \delta G')}\right)\right) +C .
\end{align*} 
The proof is complete by letting $\psi'(t) = M \psi (6\mu'(t))+C$.
\end{proof}
{\bf Acknowledgement.}
The first author thanks SERB-CRG and the second author's research work is supported by CSIR-UGC.

\end{document}